\begin{document}

\author{Dragomir \v Sari\' c}
\thanks{This research was partially supported by National Science Foundation grant DMS 1102440.}

\address{Department of Mathematics, Queens College of CUNY,
65-30 Kissena Blvd., Flushing, NY 11367}
\email{Dragomir.Saric@qc.cuny.edu}

\address{Mathematics PhD. Program, The CUNY Graduate Center, 365 Fifth Avenue, New York, NY 10016-4309}

\theoremstyle{definition}

 \newtheorem{definition}{Definition}[section]
 \newtheorem{remark}[definition]{Remark}
 \newtheorem{example}[definition]{Example}

\newtheorem*{notation}{Notation}

\theoremstyle{plain}

 \newtheorem{proposition}[definition]{Proposition}
 \newtheorem{theorem}[definition]{Theorem}
 \newtheorem{corollary}[definition]{Corollary}
 \newtheorem{lemma}[definition]{Lemma}

\newcommand{\eps}{\varepsilon}
\newcommand{\G}{\Gamma}
\newcommand{\g}{\gamma}
\newcommand{\D}{\Delta}
\renewcommand{\d}{\delta}
\newcommand{\dist}{\mathrm{dist}}
\newcommand{\m}{\mathrm{mod}}

\title[Upper bounded pants decomposition]{Fenchel-Nielsen coordinates on upper bounded pants decompositions}

\subjclass{}

\keywords{}
\date{\today}

\maketitle

\begin{abstract}
Let $X_0$ be an infinite type hyperbolic surface (whose boundary components, if any, are closed geodesics or punctures) which has an upper bounded pants decomposition. The length spectrum Teichm\"uller space $T_{ls}(X_0)$ consists of all surfaces $X$ homeomorphic to $X_0$ such that the ratios of the corresponding simple closed geodesics are uniformly bounded from below and from above. Alessandrini, Liu, Papadopoulos and Su \cite{ALPS} described the Fenchel-Nielsen coordinates for $T_{ls}(X_0)$ and using these coordinates they proved that $T_{ls}(X_0)$ is path connected. We use the Fenchel-Nielsen coordinates for $T_{ls}(X_0)$ to induce a locally biLipschitz homeomorphism between $l^{\infty}$ and $T_{ls}(X_0)$ (which extends analogous results by Fletcher \cite{Fletcher} and by Allessandrini, Liu, Papadopoulos, Su and Sun \cite{ALPSS} for the unreduced and the reduced $T_{qc}(X_0)$). Consequently, $T_{ls}(X_0)$ is contractible. We also characterize the closure in the length spectrum metric of the quasiconformal Teichm\"uller space $T_{qc}(X_0)$ in $T_{ls}(X_0)$.
\end{abstract}

\section{Introduction}

Let $X_0$ be a complete hyperbolic surface of infinite type whose boundary components, if any, are closed geodesics or punctures. Assume that there exists a pants decomposition $\mathcal{P}=\{\alpha_n\}$ of $X_0$ by simple closed geodesics such that their lengths are bounded from the above by a fixed constant $M_0>0$. We say that such a pants decomposition is {\it upper bounded}. By \cite{ALPSS}, any other pants decomposition of $X_0$ by simple closed curves can be straightened to a pants decomposition by simple closed geodesics. 

The {\it quasiconformal} Teichm\"uller space $T_{qc}(X_0)$ consists of all quasiconformal maps $f:X_0\to X$ up to post composition by isometries and up to bounded homotopies which setwise fix boundary components of $X$. Note that bounded homotopies do not fix boundary geodesics pointwise since the distance between any two points (on a boundary geodesic) is finite. Thus $T_{qc}(X_0)$ is the reduced quasiconformal Teichm\"uller space of the surface $X_0$.

The {\it length spectrum} Teichm\"uller space $T_{ls}(X_0)$ 
consists of all homeomorphisms $h:X_0\to X$ up to post compositions by isometries and up to bounded homotopies that setwise preserve the boundary components of $X$ such that
$$
L(X_0,X):=\sup_{\beta}\max\{ \frac{l_{\beta}(X)}{l_{\beta}(X_0)},\frac{l_{\beta}(X_0)}{l_{\beta}(X)}\}<\infty ,
$$
where the supremum is over all simple closed curve $\beta$ on $X_0$,
and where $l_{\beta}(X),l_{\beta}(X_0)$ are the lengths of the geodesic representatives of $\beta$ on $X,X_0$, respectively.  Note that $T_{qc}(X_0)\subset T_{ls}(X_0)$ because for each simple closed geodesic $\beta$ on $X_0$ and a $K$-quasiconformal map $f:X_0\to X$ we have (cf. Wolpert \cite{Wolpert}) 
$$
\frac{1}{K}l_{\beta}(X_0)\leq l_{\beta}(X)\leq Kl_{\beta}(X_0).
$$
The {\it length spectrum metric} $d_{ls}$ is defined by
$$
d_{ls}(X,Y)=\frac{1}{2}\log L(X,Y)
$$
for $X,Y\in T_{ls}(X_0)$. Shiga \cite{Shiga} was the first to study the length spectrum metric on quasiconformal Teichm\"uller spaces of  surfaces of infinite type and he proved that $d_{ls}$ in general is not complete on $T_{qc}(X_0)$. This implies that $T_{qc}(X_0)$ could be a proper subset of $T_{ls}(X_0)$.

For a fixed upper bounded pants decomposition $\mathcal{P}=\{\alpha_n\}$ on $X_0$, the assignment of Fenchel-Nielsen coordinates $\{ (l_{\alpha_n}(X),t_{\alpha_n}(X))\}$ to each $X\in T_{qc}(X_0)$ or $X\in T_{ls}(X_0)$ completely determines the (marked) surface $X$. Alessandrini, Liu, Papadopoulus, Su and Sun \cite{ALPSS} proved that the Fenchel-Nielsen coordinates for $T_{qc}(X_0)$ satisfy
$\sup_n |\log\frac{l_{\alpha_n}(X)}{l_{\alpha_n}(X_0)}|<\infty$ and $\sup_n |t_{\alpha_n}(X)-t_{\alpha_n}(X_0)|<\infty$, and that the map from $T_{qc}(X_0)$ to the Fenchel-Nielsen coordinates is a locally biLipschitz homeomorphism onto $l^{\infty}$. Alessandrini, Liu, Papadopuolus and Su \cite{ALPS} proved that the Fenchel-Nielsen coordinates for $X\in T_{ls}(X_0)$ satisfy 
$$
\sup_{\alpha_n\in\mathcal{P}}|\log\frac{l_{\alpha_n}(X)}{l_{\alpha_n}(X_0)}|<\infty
$$ 
and
$$
\sup_{\alpha_n\in\mathcal{P}}\frac{|t_{\alpha_n}(X)-t_{\alpha_n}(X_0)|}{\max\{ 1,|\log l_{\alpha_n}(X_0)|\}} <\infty.
$$
Moreover, they proved that if $X_0$ (equipped with an upper bounded pants decomposition) contains a sequence of simple closed geodesics whose lengths go to $0$ then $T_{qc}(X_0)\subsetneq T_{ls}(X_0)$ and that $T_{qc}(X_0)$ is nowhere dense in $T_{ls}(X_0)$ (cf. \cite{ALPS}).

Let $F:T_{ls}(X_0)\to l^{\infty}$, called the {\it normalized Fenchel-Nielsen map}, be defined by
\begin{equation*}
F(X)=\Big{\{} (\log\frac{l_{\alpha_n}(X)}{l_{\alpha_n}(X_0)},\frac{t_{\alpha_n}(X)-t_{\alpha_n}(X_0)}{\max\{ 1,|\log l_{\alpha_n}(X_0)|\}})\Big{\}}_n
\end{equation*}
where if $\alpha_n$ is a boundary geodesic of $X_0$ then we take only the length component of the Fenchel-Nielsen coordinates. 

By \cite{ALPSS}, the Fenchel-Nielsen coordinates give a locally biLipschitz homeomorphism between the (reduced) quasiconformal Teichm\"uller space $T_{qc}(X_0)$ and $l^{\infty}$ which implies that $T_{qc}(X_0)$ is contractible. Fletcher \cite{Fletcher} used complex analytic methods to prove that the unreduced quasiconformal Teichm\"uller space is locally biLipschitz to $l^{\infty}$.
Our main result is (cf. \S 2, Theorem \ref{thm:main} and Corollary \ref{cor:contractible}):

\vskip .2 cm

\paragraph{\bf Theorem 1.} {\it The normalized Fenchel-Nielsen map
$$
F:T_{ls}(X_0)\to l^{\infty}
$$
is a locally biLipschitz homeomorphism. 

In particular, the length spectrum Teichm\"uller space $T_{ls}(X_0)$ is contractible.}

\vskip .2 cm

Thus \cite{Fletcher}, \cite{ALPSS} and Theorem 1 imply that the unreduced quasiconformal Teichm\"uller space, the reduced quasiconformal Teichm\"uller space and the length spectrum Teichm\"uller space are locally biLipschitz to $l^{\infty}$ and thus to each other.

A problem of characterizing the closure of $T_{qc}(X_0)$ inside $T_{ls}(X_0)$ for the length spectrum metric $d_{ls}$ is raised in \cite{ALPS}. We use the Fenchel-Nielsen coordinates to characterize the closure $\overline{T_{qc}(X_0)}$ of $T_{qc}(X_0)$.

\vskip .2 cm

\paragraph{\bf Theorem 2.} {\it Let $X_0$ be an infinite type hyperbolic surface with an upper bounded pants decomposition $\mathcal{P}=\{\alpha_n\}$. Then $X\in \overline{T_{qc}(X_0)}$ if and only if
$$
\sup_{\alpha_n\in\mathcal{P}}|\log\frac{l_{\alpha_n}(X)}{l_{\alpha_n}(X_0)}|<\infty
$$ 
and
$$
|t_{\alpha_n}(X)-t_{\alpha_n}(X_0)|=o(|\log l_{\alpha_n}(X_0)|)
$$
as $|\log l_{\alpha_n}(X_0)|\to \infty$.
}

\section{The Fenchel-Nielsen coordinates}

We prove that the normalized Fenchel-Nilesen map is a localy biLipschitz homeomorphism onto $l^{\infty}$.

\begin{theorem}
\label{thm:main}
Let $X_0$  be an infinite type complete hyperbolic surface equipped with an upper bounded geodesic pants decomposition $\mathcal{P} =\{ \alpha_n\}_{n\in\mathbb{N}}$.  The normalized Fenchel-Nielsen map
\begin{equation}
\label{eq:fn-norm}
F(X)=\Big{\{} \Big{(}\log \frac{l_{\alpha_n}(X)}{l_{\alpha_n}(X_0)},\frac{t_{\alpha_n}(X)-t_{\alpha_n}(X_0)}{\max\{ 1,|\log l_{\alpha_n}(X_0)|\}}\Big{)}\Big{\}}_{n\in\mathbb{N}}
\end{equation}
for $X\in T_{ls}(X_0)$, induces a locally biLipschitz surjective homeomorphism
$$
F:T_{ls}(X_0)\to l^{\infty}.
$$
\end{theorem}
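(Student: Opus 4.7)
The plan is to verify three properties of $F$: it lands in $l^\infty$, it is a bijection, and it is locally biLipschitz with the length-spectrum metric on the source and the $l^\infty$ metric on the target. The first property is exactly the pair of ALPS estimates quoted in the introduction. Injectivity is immediate because Fenchel--Nielsen coordinates with respect to a fixed pants decomposition and fixed seams determine the marked hyperbolic structure uniquely. For surjectivity, given $\xi=\{(L_n,T_n)\}\in l^\infty$, I would construct $X$ by prescribing $l_{\alpha_n}(X)=l_{\alpha_n}(X_0)e^{L_n}$ and $t_{\alpha_n}(X)=t_{\alpha_n}(X_0)+T_n\max\{1,|\log l_{\alpha_n}(X_0)|\}$, then invoke the ALPS characterization of Fenchel--Nielsen coordinates for $T_{ls}(X_0)$ to conclude that the resulting marked surface lies in $T_{ls}(X_0)$.

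The substance of the theorem is the local biLipschitz comparison between $d_{ls}(X,Y)$ and $\|F(X)-F(Y)\|_\infty$. The easy direction controls $F$ by $d_{ls}$. The length entries are trivial, since $|\log(l_{\alpha_n}(X)/l_{\alpha_n}(Y))|\leq 2d_{ls}(X,Y)$. For the twist entries I would, for each $\alpha_n$, fix a simple closed geodesic $\beta_n$ on $X_0$ crossing $\alpha_n$ once or twice, sitting inside the union of the (at most two) pants adjacent to $\alpha_n$, and having $l_{\beta_n}(X_0)$ bounded in terms of the pants bound $M_0$. Hyperbolic trigonometry inside those pants expresses $l_{\beta_n}(X)$ as an explicit function of the lengths and twists at the adjacent $\alpha_m$; when $l_{\alpha_n}(X_0)$ is small the standard collar expansion has a twist contribution whose coefficient is comparable to $|\log l_{\alpha_n}(X_0)|$. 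Combining this expansion with $|\log(l_{\beta_n}(X)/l_{\beta_n}(Y))|\leq 2d_{ls}(X,Y)$ and the already-controlled length entries produces the required bound on the normalized twist difference.

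The harder direction is to show that smallness of $\|F(X)-F(Y)\|_\infty$ forces smallness of $d_{ls}(X,Y)$. Given an arbitrary simple closed geodesic $\gamma$ on $X_0$, I would decompose $\gamma$ into its pants-crossing arcs in each pair of pants $P_k$ of $\mathcal{P}$. The length of such an arc is a smooth function of the three boundary lengths of $P_k$ together with the signed hyperbolic displacements between its endpoints on the two boundary geodesics it joins, and these displacements are in turn determined by the twists $t_{\alpha_n}(X)$ and by the combinatorial intersection pattern of $\gamma$ with $\mathcal{P}$. Summing and applying the pants trigonometric identities, the difference $\log(l_\gamma(X)/l_\gamma(Y))$ is estimated arc by arc: each arc contributes a term of order $|\log(l_{\alpha_n}(X)/l_{\alpha_n}(Y))|$ plus a term of order $|t_{\alpha_n}(X)-t_{\alpha_n}(Y)|$ times a pants-trigonometric coefficient that asymptotically behaves like $\max\{1,|\log l_{\alpha_n}(X_0)|\}^{-1}$, divided by the number of crossings of $\gamma$ with $\alpha_n$. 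This is precisely the weighting built into the definition of $F$, so the total estimate collapses to a multiple of $\|F(X)-F(Y)\|_\infty$ independent of $\gamma$.

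The principal technical obstacle is making the pants-arc estimate of the third paragraph \emph{uniform in $\gamma$} and uniform over an $l^\infty$-neighborhood of a given $X\in T_{ls}(X_0)$, so that the comparison constants depend only on $X_0$, the pants bound $M_0$, and the size of the neighborhood. Once the local biLipschitz property is in hand, $F$ is a homeomorphism onto $l^\infty$, and contractibility of $T_{ls}(X_0)$ follows at once from the straight-line contraction of $l^\infty$ to the origin, which pulls back via $F^{-1}$ to a contraction of $T_{ls}(X_0)$ to the basepoint $X_0$.
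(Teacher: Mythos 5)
Your overall plan (well-definedness, bijectivity, two Lipschitz estimates) matches the paper's structure, and the surjectivity/injectivity remarks are fine, but both Lipschitz directions as you sketch them have concrete problems.

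In your ``easy'' direction you assert that one can fix a simple closed geodesic $\beta_n$ crossing $\alpha_n$ once or twice with $l_{\beta_n}(X_0)$ \emph{bounded} in terms of $M_0$. This contradicts the Collar Lemma: any essential curve crossing $\alpha_n$ must traverse the standard collar about $\alpha_n$, whose half-width grows like $|\log l_{\alpha_n}(X_0)|$, so $l_{\beta_n}(X_0)\gtrsim |\log l_{\alpha_n}(X_0)|\to\infty$ whenever $l_{\alpha_n}(X_0)\to 0$. (This is exactly why the paper establishes $l_{\beta_n}(X_0)\le C_4(M_0)\max\{1,|\log l_{\alpha_n}(X_0)|\}$ via right-angled pentagons, rather than a uniform bound.) Your subsequent description of ``a twist contribution whose coefficient is comparable to $|\log l_{\alpha_n}(X_0)|$'' also has the roles inverted: twisting by $t$ changes $l_{\beta_n}$ by roughly $i(\beta_n,\alpha_n)\,|t|$, while the \emph{baseline} length is $\sim|\log l_{\alpha_n}|$; it is the ratio of these that produces the normalization $\max\{1,|\log l_{\alpha_n}(X_0)|\}$.

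More seriously, the step ``small change in $l_{\beta_n}$ implies small change in twist'' is not automatic. Kerckhoff's formula gives $\tfrac{d}{dt}l_{\beta_n^*} = \sum\cos\varphi$ at the intersection points, and if $\beta_n^*$ meets $\alpha_n$ nearly orthogonally this derivative nearly vanishes, so a bounded length difference puts no useful bound on the twist difference. The paper's Step~III resolves this by replacing $\beta_n^*$ with a curve $\beta_n^{**}$ obtained by $k=[\tfrac{1}{l_{\alpha_n}}\log\tfrac{1+\epsilon_0}{1-\epsilon_0}]+2$ extra full twists so that $\cos\varphi\ge\epsilon_0$ throughout the twist path, after first passing to an intermediate surface $X'$ with the same cuff lengths as $Y$ (so that the comparison really is a pure multi-twist, not a mixture of length change and twist change). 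Neither device appears in your sketch, and without them the inequality you want does not follow. Your ``harder'' direction is closer in spirit to the paper's Step~IV (twist-contribution upper bound plus Collar-Lemma lower bound on $l_\gamma$, summed over cuffs), but it too silently skips the need to first normalize cuff lengths via a quasiconformal adjustment (the paper's $X_b\rightsquigarrow X_{b'}$) and to control the twist distortion that adjustment introduces; without this the twist parameters of $X$ and $Y$ are not directly comparable, since a twist is only well-defined relative to a cuff length. As it stands the proposal names the right ingredients but leaves the two genuinely delicate steps of the argument unaddressed.
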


\begin{proof} Let $M_0$ be such that $l_{\alpha_n}(X_0)\leq M_0$ for each $\alpha_n\in\mathcal{P}$.

\vskip .1 cm

\noindent {\bf Step I:}
We establish that $F(T_{ls}(X_0))\subset l^{\infty}$ which is already proved in \cite{ALPS}. We give another proof in order to facilitate the rest of the argument. By the definition, $X\in T_{ls}(X_0)$ if there is $M>0$ such that
$$
|\log\frac{l_{\gamma}(X)}{l_{\gamma}(X_0)}|\leq M
$$ 
for each simple closed curve $\gamma\in\mathcal{C}$ on $X_0$. In particular $\{ \log \frac{l_{\alpha_n}(X)}{l_{\alpha_n}(X_0)}\}_{n\in\mathbb{N}}$ is a bounded sequence. 

It remains to bound the twists. The choice of the twists of $X_0$ on $\alpha_n$ are determined up to integer multiples of $l_{\alpha_n}(X_0)$. Without loss of generality, we normalize them such that, for each $n\in\mathbb{N}$, $$0\leq t_{\alpha_n}(X_0)< l_{\alpha_n}(X_0).$$ 
Given this normalization, it is enough to prove that 
$$
|t_{\alpha_n}(X)|/\max\{ 1,|\log l_{\alpha_n}(X_0)|\}
$$
is bounded uniformly in $n\in\mathbb{N}$.

Using \cite{Bishop}, there exists a surface $X'$ which is $K$-quasiconformal to $X_0$ such that $l_{\alpha_n}(X')=l_{\alpha_n}(X)$ for all $n\in\mathbb{N}$, where $K=K(M)$ (cf. \cite{ALPS}). The $K$-quasiconformal map  $f:X_0\to X'$ maps each pair of pants $P\in\mathcal{P}$ of $X_0$ onto a geodesic pair of pants of $X'$ such that on each boundary geodesic the map is affine. Divide each geodesic pair of pants into two right angled hexagons by three {\it seams}, namely three geodesic arcs connecting pairs of boundary curves and orthogonal to them. Each hexagon contains half of each boundary geodesic of the pair of pants which are called {\it a-sides}. The other three sides of the hexagon which are seams are called {\it b-sides}. The two hexagons are glued along their b-sides to obtain the pair of pants and the pairwise union of their a-sides forms three geodesic boundaries of the pair of pants. The map $f:X_0\to X'$ maps a-sides of each hexagon of each pair of pants of $X_0$ onto a-sides of hexagons of the corresponding pair of pants of $X$ and it is affine on the a-sides. Note that a single $\alpha_n\in\mathcal{P}$ is on the boundary of two pairs of pants $P_1^0$ and $P_2^0$ of $\mathcal{P}$ which implies that $\alpha_n$ is divided into a-sides with respect to both $P_1^0$ and $P_2^0$. The two divisions of $\alpha_n$ into a-sides do not match in general and the distance between the endpoints of two a-sides coming from two pairs of pants is the twist parameter $t_{\alpha_{n}}(X_0)$ of $X_0$ at the closed geodesic $\alpha_n$ for the pants decomposition $\mathcal{P}$. The map $f:X_0\to X'$ is affine on each $\alpha_n$, it maps the foots of the seams of the pair of pants $P_i^0$ to the foots of the corresponding seams of $f(P_i^0)=P_i'$ for $i=1,2$ and it does not introduce any full twisting along $\alpha_n$ by its construction (cf. \cite{ALPS}, \cite{Bishop}).  Then for $K=K(M)$ we have
$$
t_{\alpha_n}(X')=\frac{l_{\alpha_n}(X')}{l_{\alpha_n}(X_0)}t_{\alpha_n}(X_0)\leq K t_{\alpha_n}(X_0).
$$

Let $t_n=t_{\alpha_n}(X)-t_{\alpha_n}(X')$. Then $X$ is obtained by a(n infinite) multi twist on $X'$ along the family $\mathcal{P}=\{\alpha_n\}$ by the amount $\{ t_n\}$. It is enough to prove that $\frac{|t_n|}{\max\{ 1,|\log l_{\alpha_n}(X_0)\}}$ is bounded in terms of $d_{ls}(X_0,X)$ because $|t_{\alpha_n}(X')|\leq KM_0$. This is proved in \cite{ALPS} using results of Minsky \cite{Min} and Choi-Rafi \cite{CR}. We give a more direct proof of this result below.
Fix a cuff $\alpha_n$ and let $P_1'=f(P_1^0)$ and $P_2'=f(P_2^0)$ be two geodesic pairs of pants with common boundary $\alpha_n$. Either $P_1'\neq P_2'$ or $P_1'=P_2'$ and we divide the argument into these two cases.

{\bf Case 1.}
Assume that $P_1'\neq P_2'$. There exists a unique geodesic arc $\gamma^i_n\subset P_i'$, for $i=1,2$, which starts and  ends at $\alpha_n$ that is orthogonal to $\alpha_n$ at both of its endpoints. Let $\beta_n$ be a closed curve on $X'$ obtained by concatenating $\gamma^1_n$ followed by an arc of $\alpha_n$ from an endpoint of $\gamma^1_n$ to an endpoint of $\gamma^2_n$ in the direction of the left twist along $\alpha_n$ followed by $\gamma^2_n$ followed by an arc of $\alpha_n$ connecting other two endpoints of $\gamma^1_n$ and $\gamma^2_n$ in the direction of the right twist (cf. Figure 1). 

\begin{figure}
\centering
\includegraphics[scale=0.7]{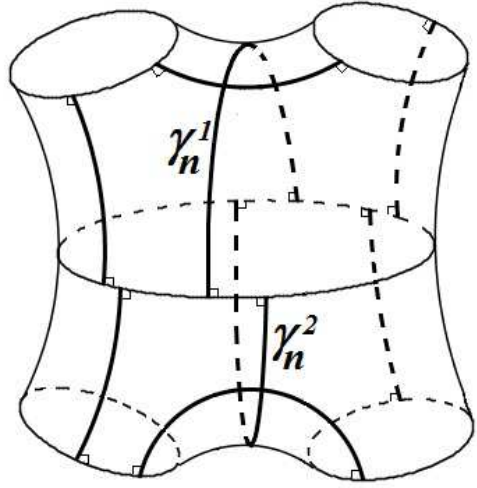}
\caption{The curve $\beta_n$.}
\end{figure}

We will give an upper bound for $t_{\alpha_n}(X)$ in terms of $l_{\beta_n}(X)$. 
Fix three consecutive lifts $\tilde{\alpha}_n^j$, for $j=1,2,3$, of $\alpha_n$ under the universal covering $\pi :\mathbb{H}^2\to X$. Let $\tilde{\beta}_n^{*}$ be the lift of the geodesic representative $\beta_n^{*}$ of $\beta_n$ that intersects  $\tilde{\alpha}_n^j$, for $j=1,2,3$. Moreover, let $\tilde{\gamma}^i_n$ be the lift of $\gamma^i_n$ that connects $\tilde{\alpha}^{i}_n$ and $\tilde{\alpha}^{i+1}_n$ (cf. Figure 2). Let $a_1=\tilde{\gamma}_n^1\cap\tilde{\alpha}_n^2$ and $a_2=\tilde{\gamma}_n^2\cap\tilde{\alpha}_n^2$, and $b=\tilde{\beta}_n^{*}\cap\tilde{\alpha}_n^2$. The lengths satisfy $l_{\gamma_n^i}(X')=l_{\gamma_n^i}(X)$ because $X$ is obtained from $X'$ by a multi twist along $\{\alpha_n\}$. We either have $d_{hyp}(a_1,b)\geq |t_{\alpha_n}(X)|/2$ or $d_{hyp}(a_2,b)\geq |t_{\alpha_n}(X)|/2$. Consider the case that 
\begin{equation}
\label{eq:*}
d_{hyp}(a_2,b)\geq |t_{\alpha_n}(X)|/2
\end{equation} 
and the other case is analogous. Let $c_2=\tilde{\gamma}_n^2\cap \tilde{\alpha}_n^3$ and let $c_1\in\tilde{\alpha}_n^3$ be the foot of the orthogonal from $b$ to $\tilde{\alpha}_n^3$. Consider the quadrilateral with vertices $b$, $c_1$, $c_2$ and $a_2$ (cf. Figure 2). We get
\begin{equation}
\label{eq:**}
\sinh d_{hyp}(b,c_1)=\sinh l_{\tilde{\gamma}_n^2}(X)\cosh d_{hyp}(b,a_2).
\end{equation}

\begin{figure}
\centering
\includegraphics[scale=0.7]{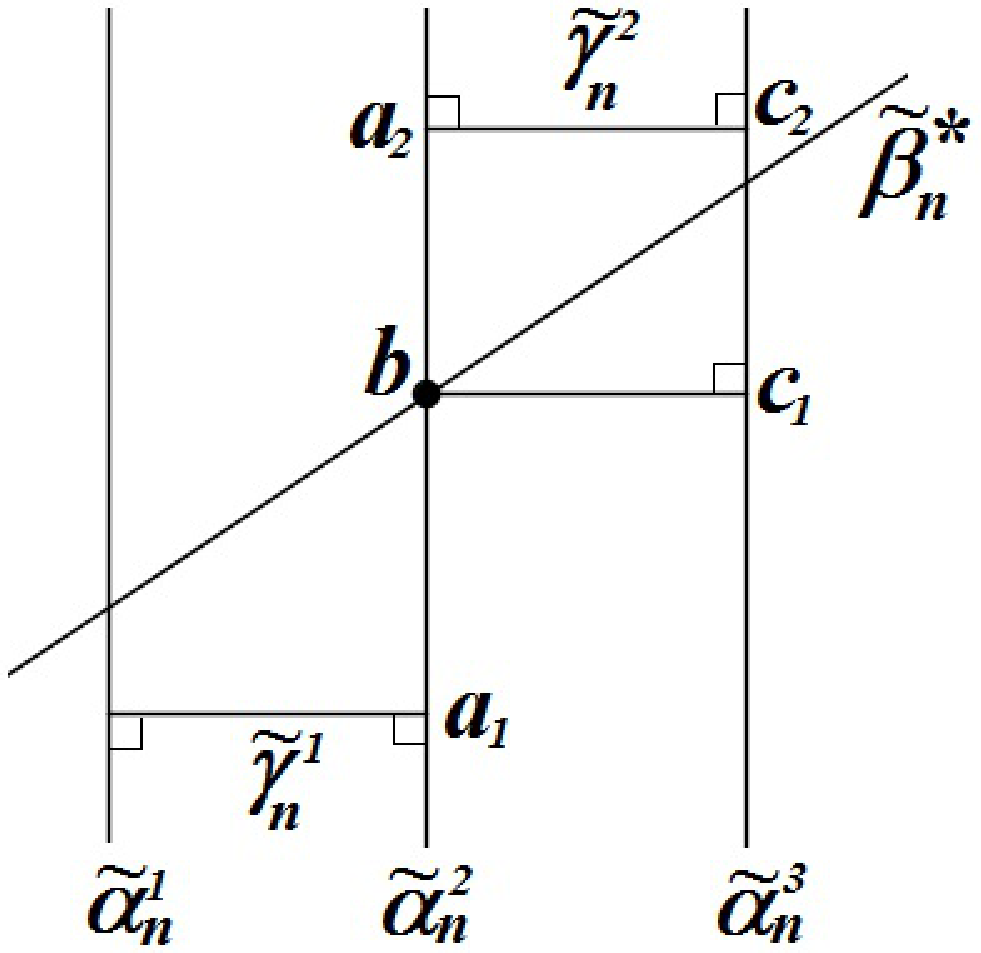}
\caption{}
\end{figure}

By the Collar lemma \cite{Buser}, there exists $C_1(M)>0$ such that
\begin{equation}
\label{eq:***}
l_{\tilde{\gamma}_n^i}(X)=l_{\tilde{\gamma}_n^i}(X')\geq C_1(M)\max \{ 1,|\log l_{\alpha_n}(X_0)|\}.
\end{equation}

Since $\sinh d_{hyp}(b,c_1)\leq \frac{e^{d_{hyp}(b,c_1)}}{2}$ and $\cosh d_{hyp}(b,a_2)\geq\frac{e^{d_{hyp}(b,a_2)}}{2}$, and by (\ref{eq:**}) and (\ref{eq:***}), we have
\begin{equation*}
d_{hyp}(b,a_2)\leq C_2(M)\max \{ 1,|\log l_{\alpha_n}(X_0)|\}+d_{hyp}(b,c_1)
\end{equation*}
which implies
\begin{equation}
\label{eq:4}
\begin{split}
d_{hyp}(b,a_2)\leq C_2(M)\max \{ 1,|\log l_{\alpha_n}(X_0)|\}+l_{\beta_n}(X)\leq\\ \leq
C_2(M)\max \{ 1,|\log l_{\alpha_n}(X_0)|\}+e^Ml_{\beta_n}(X_0).
\end{split}
\end{equation}

Note that by construction $$l_{\beta_n}(X_0)\leq l_{\gamma_n^1}(X_0)+l_{\gamma_n^2}(X_0)+l_{\alpha_n}(X_0).$$
We estimate $l_{\gamma_n^i}(X_0)$ from the above using right-angled pentagons.
Namely
each hexagon of $P_i^0$ contains a half of the arc $\gamma^i_n$ and  $\gamma^i_n$ intersects the b-side of the hexagons that connects the two boundary geodesics of $P_i^0$ different from $\alpha_n$. Then both hexagons of $P_i^0$ are divided into two right-angled pentagons by $\gamma^i_n$. 
The sides of the obtained pentagons are as follows in the cyclic order: a portion of the a-side on $\alpha_n$, followed by the half of $\gamma_n^i$, followed by a portion of a b-side, followed by an a-side on a boundary curve different from $\alpha_n$ and followed by a b-side. 
We choose one of the two pentagons such that the portion of the a-side has length at least $\frac{1}{4}l_{\alpha_n}(X_0)$.
Since $l_{\alpha_n}(X_0)\leq M_0$ it follows that any a-side of the hexagon has length at most $\frac{1}{2}M_0$ and a hyperbolic formula for the right-angled pentagons gives
$$
\cosh \frac{M_0}{2}\geq\sinh \frac{1}{4}l_{\alpha_n}(X_0)\sinh\frac{1}{2}l_{\gamma_n^i}(X_0)\geq\frac{1}{4}l_{\alpha_n}(X_0)\sinh\frac{1}{2}l_{\gamma_n^i}(X_0).
$$
The inequality implies that there is $C_3(M_0)$ such that
$$
l_{\gamma_n^i}(X_0)\leq C_3(M_0)\max\{ 1,|\log l_{\alpha_n}(X_0)|\}
$$
which in turn implies
\begin{equation}
\label{eq:L}
l_{\beta_n}(X_0)\leq C_4(M_0)\max\{ 1,|\log l_{\alpha_n}(X_0)|\}
\end{equation}
for some constant $C_4(M_0)>0$.

By (\ref{eq:*}), (\ref{eq:4}) and (\ref{eq:L}) we have
\begin{equation}
\label{eq:twist-upper-bound}
\frac{|t_{\alpha_n}(X)|}{2}\leq d_{hyp}(a_2,b)\leq [C_2(M)+e^MC_4(M_0)]\\max\{ 1,|\log l_{\alpha_n}(X_0)|\} 
\end{equation}
which gives 
\begin{equation}
\label{eq:5}
\frac{|t_{\alpha_n}(X)|}{\max\{ 1,|\log l_{\alpha_n}(X_0)|\}}\leq C_5(M,M_0)
\end{equation}
and this finishes the proof of $F(T_{ls}(X_0))\subset l^{\infty}$ in the case $P_1^0\neq P_2^0$.

{\bf Case 2.} Assume that $P_1^0=P_2^0$. We define $\gamma_n$ to be the unique geodesic arc in $f(P_1^0)=f(P_2^0)=P_1'$ starting and ending at $\alpha_n$
and orthogonal to $\alpha_n$ at both of its endpoints. Then we define a closed curve $\beta_n$ to consists of $\gamma_n$ and an arc of $\alpha_n$ of the size $t_{\alpha_n}(X')$. The above argument applies to this case as well.

\vskip .1 cm

\noindent {\bf Step II: $F:T_{ls}(X_0)\to l^{\infty}$ is surjective.} For $a\in l^{\infty}$ the surface $X_a$ obtained by gluing the pants with prescribed cuffs and twists is complete. The marking map for $X_a$ can be chosen to be a homeomorphism because each twist is realized in an annulus containing a given cuff. Let $X_a'=X_{a'}$ be the surface obtained by a $K$-quasiconformal map $f:X_0\to X_a'$ such that $l_{\alpha_n}(X_a')=l_{\alpha_n}(X_a)$ for all $n\in\mathbb{N}$ as before (cf. \cite{Bishop}). 
Then
\begin{equation*}
\Big{|}\log\frac{l_{\beta}(X_a')}{l_{\beta}(X_0)}\Big{|}\leq M(K)<\infty
\end{equation*}
for $n\in\mathbb{N}$. The surface $X_a$ is obtained by a multi twist around $\mathcal{P}=\{\alpha_n\}$ by the amount $\{ t_n=t_{\alpha_n}(X_a)-t_{\alpha_n}(X_a')\}$. Note that $0\leq t_{\alpha_n}(X_{a'})=\frac{l_{\alpha_n}(X_a)}{l_{\alpha_n}(X_0)}t_{\alpha_n}(X_0)<l_{\alpha_n}(X_a)$. By the Collar lemma \cite{Buser}, we have that 
\begin{equation}
\label{eq:str}
l_{\beta}(X_{a'}),l_{\beta}(X_a)\geq C\sum_n i(\alpha_n,\beta )\max\{ 1,|\log l_{\alpha_n}(X_0)|\}
\end{equation}
for each simple closed curve $\beta$ on $X_0$. Then
$$
\log\frac{l_{\beta}(X_a)}{l_{\beta}(X_{a'})}\leq \log\frac{l_{\beta}(X_{a'})+\sum_ni(\alpha_n,\beta )|t_n|}{l_{\beta}(X_{a'})}
$$
and
$$
|t_n|\leq C'\max\{ 1,|\log l_{\alpha_n}(X_0)|\}
$$
which together with (\ref{eq:str}) implies that
$$
\log\frac{l_{\beta}(X_a)}{l_{\beta}(X_{a'})}\leq \frac{C'}{C}=C''
$$
for each $\beta\in\mathcal{C}$. In the exactly the same fashion, we obtain
$$
\log\frac{l_{\beta}(X_{a'})}{l_{\beta}(X_{a})}\leq C'''
$$
for each $\beta\in\mathcal{C}$. Thus $X_a\in T_{ls}(X_0)$ and $F:T_{ls}(X_0)\to l^{\infty}$ is onto.
\vskip .1 cm

\noindent {\bf Step III: $F:T_{ls}(X_0)\to l^{\infty}$ is localy Lipschitz.} Let $X_1\in T_{ls}(X_0)$ be fixed and let $X,Y\in B_{\frac{1}{2}}(X_1)$ be two arbitrary points in the ball of radius $\frac{1}{2}$ centered at $X_1\in T_{ls}(X_0)$. Consequently $d_{ls}(X,Y)<1$. 
Note that $|\log\frac{l_{\alpha_n}(X)}{l_{\alpha_n}(X_0)}- \log\frac{l_{\alpha_n}(Y)}{l_{\alpha_n}(X_0)}|=|\log\frac{l_{\alpha_n}(X)}{l_{\alpha_n}(Y)}|\leq d_{ls}(X,Y)$ for each $n\in\mathbb{N}$. It remains to consider $\frac{|t_{\alpha_n}(X)-t_{\alpha_n}(Y)|}{\max\{ 1,|\log l_{\alpha_n}(X_0)|\}}$. By \cite{ALPS} and \cite{Bishop}, there exists a $[1+Cd_{ls}(X,Y)]$-quasiconformal map $f:X\to X'$ such that $l_{\alpha_n}(X')=l_{\alpha_n}(Y)$ for each $n\in\mathbb{N}$ with $C=C(e^{d_{ls}(X_0,X_1)+1}M_0)>0$. 
Let $0\leq \tilde{t}_{\alpha_n}(X)<l_{\alpha_n}(X)$ be such that there exists an integer $k\in\mathbb{Z}$ with 
\begin{equation}
\label{eq:6}
t_{\alpha_n}(X)=k\cdot l_{\alpha_n}(X)+  \tilde{t}_{\alpha_n}(X).
\end{equation}
Note that, for $C'=C'(M_1+1,M_0)$, we have 
\begin{equation}
\label{eq:7}
|k|\leq C'\frac{ |\log l_{\alpha_n}(X_0)|}{l_{\alpha_n}(X_0)}
\end{equation}
by (\ref{eq:5}) and (\ref{eq:6}) because normalized twists $\tilde{t}_{\alpha_n}$ are bounded from the above and
$$
e^{-(M_1+1)}l_{\alpha_n}(X_0)\leq l_{\alpha_n}(X)\leq e^{M_1+1}l_{\alpha_n}(X_0).
$$

 The construction of $f:X\to X'$ from \cite{ALPS} implies that 
$$
t_{\alpha_n}(X')=k\cdot l_{\alpha_n}(Y)+ \frac{l_{\alpha_n}(Y)}{l_{\alpha_n}(X)} \tilde{t}_{\alpha_n}(X).
$$
We estimate $|t_{\alpha_n}(X')-t_{\alpha_n}(X)|$.  Let $M_1=d_{ls}(X_0,X_1)$. Then we have
\begin{equation}
\label{eq:diff-qc-twist}
\begin{split}
|t_{\alpha_n}(X')-t_{\alpha_n}(X)|\leq |k|\cdot |l_{\alpha_n}(Y)-l_{\alpha_n}(X)|+\tilde{t}_{\alpha_n}(X)\Big{|}\frac{l_{\alpha_n}(Y)}{l_{\alpha_n}(X)}-1\Big{|}\\
\leq C'\frac{\max\{ 1,|\log l_{\alpha_n}(X_0)|\}}{l_{\alpha_n}(X_0)}l_{\alpha_n}(X)\Big{|}\frac{l_{\alpha_n}(Y)}{l_{\alpha_n}(X)}-1\Big{|}+\tilde{t}_{\alpha_n}(X)\Big{|}\frac{l_{\alpha_n}(Y)}{l_{\alpha_n}(X)}-1\Big{|}
\end{split}
\end{equation}
which implies
\begin{equation}
\label{eq:norm-diff-qc-twist}
\frac{|t_{\alpha_n}(X)-t_{\alpha_n}(X')|}{\max \{ 1,|\log l_{\alpha_n}(X_0)|\}}\leq (C' e^M+M_0e^M)\Big{|}\frac{l_{\alpha_n}(Y)}{l_{\alpha_n}(X)}-1\Big{|}\leq C_1\Big{|}\log\frac{l_{\alpha_n}(Y)}{l_{\alpha_n}(X)}\Big{|}.
\end{equation}

Note that $d_{ls}(X,X')\leq C d_{ls}(X,Y)$ which implies that 
$$
d_{ls}(X',Y)\leq d_{ls}(X',X)+d_{ls}(X,Y)\leq C_2 d_{ls}(X,Y).
$$
 
 Let
\begin{equation}
\label{eq:relative_t_n}
t_n=t_{\alpha_n}(Y)-t_{\alpha_n}(X').
\end{equation}
The surface $Y$ is obtained from $X'$ by a multi twist along $\{\alpha_n\}$ by the amount $\{ t_n\}$. As before, we divide the argument into two cases: $P_1'=P_2'$ and $P_1'\neq P_2'$.

{\bf Case 1.}
Given $\alpha_n$, assume first that the pairs of pants with boundary $\alpha_n$ are equal, namely $P_1'=P_2'$. Let $\beta_n$ be a closed curve obtained by concatenating the unique arc $\gamma_n$ in $P_1'$ orthogonal to $\alpha_n$ at both of its endpoints followed by an arc on $\alpha_n$ of  the length $\tilde{t}_{\alpha_n}(X')$. Let $\beta_n^{*}$ be the geodesic representative of $\beta_n$. 

Denote by $X_t'$ the hyperbolic surface obtained by twisting the amount $t\cdot t_n$, for $t\in\mathbb{R}$ and $t_n$ defined by (\ref{eq:relative_t_n}), along the cuffs $\alpha_n$ on the surface $X'$. Note that $X_0'=X'$ and that by the definition of $t_n$ we have that $X_1'=Y$.

Recall that (cf. \cite{Ker}, \cite{EpsteinMarden})
\begin{equation}
\label{eq:deriv-length}
\frac{d}{d(t\cdot t_n)}l_{\beta_n^{*}}(X_t')=\cos \varphi_t^{*}
\end{equation}
where $\varphi_t^{*}\in (0,\pi )$ is the angle between $\tilde{\beta}^{*}_n$ and $\tilde{\alpha}_n$.
Let us fix $\epsilon_0>0$. Note that $\varphi_t^{*}$ is either increasing or decreasing from $\varphi_0^{*}$ to $\varphi_1^{*}$ in $t$ for $0\leq t\leq 1$ (depending whether $t_n$ is positive or negative) due to the fact that the geodesic length along a left earthquake with support $\alpha_n$ is a convex function (cf. \cite{Ker}, \cite{EpsteinMarden}).  

Assume that $t_n>0$. If $\cos\varphi_0^{*}\geq\epsilon_0$ (which implies $\cos\varphi_t^{*}\geq\epsilon_0$ for $0\leq t$) then we set $\beta_n^{**}:=\beta_n^{*}$. If $\cos\varphi_0^{*}<\epsilon_0$ then we choose $\beta_n^{**}$ such that $\cos\varphi_t^{**}>\epsilon_0$ as follows.

Consider universal covering $\pi:\mathbb{H}^2\to X'$ such that one lift $\tilde{\alpha}_n$ of $\alpha_n$ is the $y$-axis. Further we arrange that two lifts $\tilde{\gamma}_n^{-1}$ and $\tilde{\gamma}_n^1$ of the arc $\gamma_n$ that are adjacent to the $y$-axis from the left and the from the right meet the $y$-axis between $i$ and $e^{l_{\alpha_n}(X')}i$. Let $b<0$ be an endpoint on $\mathbb{R}$ of the hyperbolic geodesic containing $\tilde{\gamma}_n^{-1}$ and let $a>0$ be an endpoint on $\mathbb{R}$ of the geodesic containing $\tilde{\gamma}_n^1$. For any $k\in\mathbb{Z}$, a $k$ full left twists on $\alpha_n$ on the surface $X'$ maps the curve $\beta_n^{*}$ to a new curve $\beta_n^{**}$. The curve obtained by the concatenating the arc $\gamma_n$ with the arc which winds around $\alpha_n$ $k$-times plus the shear amount $\tilde{t}_{\alpha_n}(X')$ is homotopic to $\beta_n^{**}$. The lift of the above arc has two orthogonal sub arcs to the $y$-axis one from the left which is equal to $\tilde{\gamma}_n^{-1}$ which meets $y$ axis at a point $|b|i$ between $i$ and $e^{l_{\alpha_n}(X')}i$, and the other orthogonal arc $\tilde{\gamma}_n^2$ which meets the $y$-axis at a point $c_2=|a|e^{kl_{\alpha_n}(X')}i$. By the definition of left twists,
it follows that one endpoint of a lift $\tilde{\beta}_n^{**}$ of $\beta_n^{**}$ is between $b$ and $0$, and the other endpoint of $\tilde{\beta}_n^{**}$ is between $ae^{kl_{\alpha_n}(X')}$ and $\infty$. Among all the geodesics whose one endpoint is in the interval $[b,0)$ and the other endpoint is in the interval $[ae^{kl_{\alpha_n}(X')},\infty )$, the geodesic with endpoints $b$ and $ae^{kl_{\alpha_n}(X')}$ subtends the largest angle $\varphi_0$ with the $y$-axis. We have
$$
\cos\varphi_0 =\frac{ae^{kl_{\alpha_n}(X')}+b}{ae^{kl_{\alpha_n}(X')}-b}.
$$
Define
$$
k=\big{[}\frac{1}{l_{\alpha_n}(X')}\log\frac{1+\epsilon_0}{1-\epsilon_0}\big{]}+2
$$
where $[x]$ is the integer part of $x\in\mathbb{R}$. Then we have that
$$
\cos\varphi_0\geq\epsilon_0
$$
which implies that
$$
\frac{1}{t_n}\frac{d}{dt}l_{\beta_n^{**}}(X_t')=\frac{d}{d(t\cdot t_n)}l_{\beta_n^{**}}(X_t')\geq \epsilon_0
$$
for all $t\in [0,1]$.

Note that
$$
l_{\beta_n^{**}}(X')\leq kl_{\alpha_n}(X')+C|\log l_{\alpha_n}(X')|\leq
C'\max\{ 1,|\log l_{\alpha_n}(X_0)|\}
$$

By the Mean Value Theorem there exists $t^{*}\in (0,1)$ such that
$$
|l_{\beta_n^{**}}(Y)-l_{\beta_n^{**}}(X')|=|\frac{d}{dt}l_{\beta_n^{**}}(X'_{t^{*}})|\geq \epsilon_0t_n
$$
because $X'_1=Y$. Since $l_{\beta_n^{**}}(X')\leq C'\max\{ 1,|\log l_{\alpha_n}(X_0)|\}$, the above gives
$$
\frac{|t_n|}{\max\{ 1,|\log l_{\alpha_n}(X_0)|\}}\leq\frac{|t_n|}{l_{\beta_n^{**}}(X')}\leq\frac{C}{\epsilon_0}|\frac{l_{\beta_n^{**}}(Y)}{l_{\beta_n^{**}}(X')}-1|\leq\frac{C}{\epsilon_0}|\log \frac{l_{\beta_n^{**}}(Y)}{l_{\beta_n^{**}}(X')}|.
$$

\vskip .1 cm

Assume now that $t_n<0$. Then we use a similar method by considering $\cos \varphi_t^{*}\leq -\epsilon_0$ and $k$ full right twist around $\alpha_n$ to replace $\beta_n^{*}$ with $\beta_n^{**}$. The proof proceeds analogously. 

\vskip .2 cm

{\bf Case 2.} The second case is when $P_1'\neq P_2'$. 
Define a closed curve $\beta_n\subset P_1'\cup P_2'\subset X'$ to consists of the 
unique arc $\gamma_n^1$ in $P_1'$ orthogonal at both of its endpoints to $
\alpha_n$ followed by the arc in $\alpha_n$ (in the direction of the left twist) of the 
size at most $l_{\alpha_n}(X')$ followed by the unique arc $\gamma_n^2\subset 
P_2'$ orthogonal to $\alpha_n$ at both of its endpoints followed by an arc on $
\alpha_n$ of size at most  $l_{\alpha_n}(X')$. 
For the convenience of the notation, denote by $\beta_n$ the closed geodesic 
homotopic to $\beta_n$.  
The arcs $\gamma_n^i$, for $i=1,2$, have lengths comparable to $\max\{ 1,|\log 
l_{\alpha_n}(X_0)|\}$ up to positive multiplicative constants. 

Let $\tilde{\alpha}_n^j$, for $j=1,2$, be two consecutive lifts of $\alpha_n$. Two lifts $\tilde{\gamma}_n^{j,k}$, for $k=1,2$, of $\gamma_n^j$ which meet $\tilde{\alpha}_n^j$ can be chosen such that the distance between their foots on $\tilde{\alpha}_n^j$ is at most $l_{\alpha_n}(X')$. Assume that $t_n>0$. We perform $k$ full left twists along $\alpha_n$ to obtain a new closed curve $\beta_n^{**}$ from the closed curve $\beta_n^{*}$. When $k=[\frac{1}{l_{\alpha_n}(X')}\log\frac{1+\epsilon_0}{1-\epsilon_0}]+2
$, we get (similar to Case 1) for both angles $\varphi_n^j$ that the new closed geodesic $\beta_n^{**}$ subtends with $\alpha_n$,
$$
\cos\varphi_n^j\geq \epsilon_0.
$$
Then
$$
\frac{d}{dt}l_{\beta_n^{**}}(X'_t)=\cos\varphi_n^1+\cos\varphi_n^2\geq 2\epsilon_0
$$
which gives
$$
\frac{|t_n|}{\max\{ 1,|\log l_{\alpha_n}(X_0)|\}}\leq C|\log\frac{l_{\beta_n^{**}}(Y)}{l_{\beta_n^{**}}(X')}|.
$$
When $t_n<0$, the proof proceeds as before.

Thus we established that the map $F:T_{ls}(X_0)\to l^{\infty}$ is locally Lipschitz.

\vskip .1 cm

\noindent {\bf Step IV: $F^{-1}:l^{\infty}\to T_{ls}(X_0)$ is locally Lipschitz.}
We consider the map $F^{-1}:l^{\infty}\to T_{ls}(X_0)$ and prove that it is also locally Lipschitz. Let $a_0\in l^{\infty}$ be fixed. Denote by $X_{a_0}$ the surface corresponding to $a_0$, namely $X_{a_0}=F^{-1}(a_0)\in T_{ls}(X_0)$. Let $a,b\in l^{\infty}$ such that $\| a-a_0\|_{\infty}< \frac{1}{2}$ and $\| b-a_0\|_{\infty} <\frac{1}{2}$ which implies $\| a-b\|_{\infty}<1$. 
There exists a $(1+C|\log\frac{l_{\alpha_n}(X_b)}{l_{\alpha_n}(X_a)}|)$-quasiconformal map $f: X_b\to X_b'$ such that $\l_{\alpha_n}(X_b')=l_{\alpha_n}(X_a)$ for all $n$ (cf. \cite{Bishop}, \cite{ALPS}). 
Recall that
$$
t_{\alpha_n}(X_b)=k l_{\alpha_n}(X_b)+\tilde{t}_{\alpha_n}(X_b)
$$
where $k\in\mathbb{Z}$, $0\leq \tilde{t}_{\alpha}(X_b)< l_{\alpha_n}(X_b)$ and
\begin{equation}
\label{eq:star}
|k|\leq\frac{C\max\{ 1,|\log l_{\alpha_n}(X_0)|\}}{l_{\alpha_n}(X_b)}.
\end{equation}

By the construction of $f:X_b\to X_b'$, we have 
$$
t_{\alpha_n}(X_b')=kl_{\alpha_n}(X_a)+\frac{l_{\alpha_n}(X_a)}{l_{\alpha_n}(X_b)}\tilde{t}_{\alpha_n}(X_b).
$$

It follows that
\begin{equation}
\label{eq:diff-in-twists}
|t_{\alpha_n}(X_b)-t_{\alpha_n}(X_b')|\leq |k|l_{\alpha_n}(X_a)\Big{|}\frac{l_{\alpha_n}(X_b)}{l_{\alpha_n}(X_a)}-1\Big{|}+l_{\alpha_n}(X_b)\Big{|}\frac{l_{\alpha_n}(X_b)}{l_{\alpha_n}(X_a)}-1\Big{|}.
\end{equation}
Since $a,b\in l^{\infty}$, it follows that there exists $C>0$ such that
\begin{equation}
\label{eq:twostars}
\Big{|}\frac{l_{\alpha_n}(X_b)}{l_{\alpha_n}(X_a)}-1\Big{|}\leq C\Big{|}\log \frac{l_{\alpha_n}(X_b)}{l_{\alpha_n}(X_a)}\Big{|}.
\end{equation}
The inequalities (\ref{eq:diff-in-twists}), (\ref{eq:star}) and (\ref{eq:twostars}) imply
$$
|t_{\alpha_n}(X_b)-t_{\alpha_n}(X_b')|\leq C\max\{ 1,|\log l_{\alpha_n}(X_0)|\}\frac{l_{\alpha_n}(X_a)}{l_{\alpha_n}(X_b)}|\log \frac{l_{\alpha_n}(X_b)}{l_{\alpha_n}(X_a)}|
+C'|\log \frac{l_{\alpha_n}(X_b)}{l_{\alpha_n}(X_a)}|
$$
where $C'=C'(\| a_0\|_{\infty} +\frac{1}{2})$, and since $|\log \frac{l_{\alpha_n}(X_b)}{l_{\alpha_n}(X_a)}|\leq \| a-b\|_{\infty}$, we get
\begin{equation}
\label{eq:irr-twists-b-b'}
\frac{|t_{\alpha_n}(X_b)-t_{\alpha_n}(X_b')|}{\max\{ 1,|\log l_{\alpha_n}(X_0)|\}}\leq C''\| a-b\|_{\infty}.
\end{equation}
 
Since $f:X_b\to X_b'$ is a $(1+C\| a-b\|_{\infty})$-quasiconformal, it follows that $d_{ls}(X_b,X_b')\leq C\| a-b\|_{\infty}$. Moreover, if $X_b'=F^{-1}(b')$ then (\ref{eq:irr-twists-b-b'}) implies that that $\| b'-b\|_{\infty}\leq C\| a-b\|_{\infty}$. Finally, $\| a-b'\|_{\infty}\leq \|a-b\|_{\infty}+\|b-b'\|_{\infty}\leq C\|a-b\|_{\infty}$.

It remains to estimate the length-spectrum distance between $X_b'=X_{b'}$ and $X_a$. This part of the argumentt is essentially contained in \cite{ALPS}.
Note that $X_a$ is obtained from $X_{b'}$ by multi twist along $\alpha_n$ by the amount $t_n'=t_{\alpha_n}(X_a)-t_{\alpha_n}(X_{b'})$. The estimate (\ref{eq:irr-twists-b-b'}) and the triangle inequality $\| t_{\alpha_n}(X_a)-t_{\alpha_n}(X_{b'})\|_{\infty}\leq \| t_{\alpha_n}(X_a)-t_{\alpha_n}(X_{b})\|_{\infty} +\| t_{\alpha_n}(X_b)-t_{\alpha_n}(X_{b'})\|_{\infty}$ gives that
$$
|t_n'|=|t_{\alpha_n}(X_a)-t_{\alpha_n}(X_{b'})|\leq C\|a-b\|_{\infty}\max\{ 1,|\log l_{\alpha_n}(X_0)|\}.
$$

For any simple closed geodesic $\beta$ on $X_{b'}$, we estimate $|\log \frac{l_{\beta}(X_{b'})}{l_{\beta}(X_a)}|$. We have
\begin{equation*}
\begin{split}
l_{\beta}(X_{b'})\leq l_{\beta}(X_a)+\sum_{n=1}^{\infty}i(\beta ,\alpha_n)|t_n'|
\leq l_{\beta}(X_a)+\\ +C\|a-b\|_{\infty}\sum_{n=1}^{\infty}i(\beta ,\alpha_n)\max\{ 1,|\log l_{\alpha_n}(X_0)|\}
\end{split}
\end{equation*}
and
$$
l_{\beta}(X_a)\geq C'\sum_{n=1}^{\infty} i(\beta ,\alpha_n)\max\{ 1,|\log l_{\alpha_n}(X_a)|\}
$$
by the Collar lemma. Since $X_a\in T_{ls}(X_0)$, it follows that
there exists $M>0$ such that $|\log l_{\alpha_n}(X_a)|\geq |\log l_{\alpha_n}(X_0)|-M$. Thus there exists $C''>0$ such that
$$
\max\{ 1,|\log l_{\alpha_n}(X_a)|\}\geq C''\max\{ 1,|\log l_{\alpha_n}(X_0)|\}.
$$
The above inequalities imply that
$$
\frac{l_{\beta}(X_{b'})}{l_{\beta}(X_a)}\leq 1+C'''\| a-b\|_{\infty}
$$
and by reversing roles played by $X_a$ and $X_{b'}$ we get
$$
\frac{l_{\beta}(X_{a})}{l_{\beta}(X_{b'})}\leq 1+C'''\| a-b\|_{\infty}.
$$
This proves that $F^{-1}:l^{\infty}\to T_{ls}(X_0)$ is Lipschitz.
\end{proof}

Since $l^{\infty}$ is contractible, we get

\begin{corollary}
\label{cor:contractible}
The length spectrum Teichm\"uller space $T_{ls}(X_0)$ for any hyperbolic surface $X_0$ with an upper bounded pants decomposition is contractible.
\end{corollary}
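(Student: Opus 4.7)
The plan is to deduce contractibility of $T_{ls}(X_0)$ as an immediate transport of the contractibility of $l^{\infty}$ across the homeomorphism $F$ supplied by Theorem \ref{thm:main}. Since a homeomorphism preserves contractibility, essentially nothing is left to do beyond recording this fact, so the corollary is really a one-line consequence.

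More explicitly, I would first recall that $l^{\infty}$ is a topological vector space (in fact a Banach space), so the straight-line homotopy $H_0 : l^{\infty} \times [0,1] \to l^{\infty}$ defined by $H_0(x,s) = (1-s) x$ is continuous and contracts $l^{\infty}$ to the origin $0 \in l^{\infty}$. Then, letting $X_0 \in T_{ls}(X_0)$ denote the basepoint (which satisfies $F(X_0) = 0$), I would define a homotopy
\begin{equation*}
H : T_{ls}(X_0) \times [0,1] \to T_{ls}(X_0), \qquad H(X,s) = F^{-1}\bigl( (1-s)\,F(X) \bigr).
\end{equation*}
By Theorem \ref{thm:main}, both $F$ and $F^{-1}$ are (locally biLipschitz, hence) continuous, so $H$ is continuous. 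Moreover $H(X,0) = F^{-1}(F(X)) = X$ and $H(X,1) = F^{-1}(0) = X_0$, so $H$ is a contraction of $T_{ls}(X_0)$ to $X_0$.

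There is no real obstacle: the entire content of the corollary is packed into the surjective homeomorphism statement of Theorem \ref{thm:main}, and the remaining observation, namely that a topological vector space is contractible, is standard. The only minor point worth stating explicitly is that the continuity of $F$ and $F^{-1}$ (which is stronger than needed, since we only use topological continuity) is what allows the straight-line contraction in $l^{\infty}$ to pull back to a contraction of $T_{ls}(X_0)$.
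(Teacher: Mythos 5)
Your proposal is correct and is exactly the paper's approach: the paper deduces the corollary in one line from the contractibility of $l^{\infty}$ and the homeomorphism $F$ of Theorem \ref{thm:main}. You have simply written out the standard pullback of the straight-line contraction, including the observation that $F(X_0)=0$, which the paper leaves implicit.
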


\section{The closure of $T_{qc}(X_0)$ in $T_{ls}(X_0)$}

A question of characterizing the closure of the image of $T_{qc}(X_0)$ inside $T_{ls}(X_0)$ was raised in \cite{ALPS}. We use our understanding of the topology on the Fenchel-Nielsen coordinates that makes the map $F:T_{ls}(X_0)\to l^{\infty}$ into a homeomorphism to give a characterization of the closure of $T_{qc}(X_0)$.

Let $l=\{ (x_1,x_2,\ldots ):x_i\in\mathbb{R}\}$ be the space of all sequences of real numbers.
We first define $\tilde{F}:T_{ls}(X_0)\to l$ by setting
\begin{equation*}
\tilde{F}(X)=\{ (x_1,x_2,\ldots )\in l:x_{2n-1}=\log \frac{l_{\alpha_n}(X)}{l_{\alpha_n}(X_0)},\ x_{2n}=t_{\alpha_n}(X)-t_{\alpha_n}(X_0)\mbox{ for } n\in\mathbb{N}\}.
\end{equation*}
If $\alpha_n$ is a boundary component we use only the length coordinate.

By \cite{ALPS} or by Theorem \ref{eq:fn-norm}, $\tilde{F}(T_{ls})\subset l$ consists of all $\bar{x}=(x_1,x_2,\ldots )\in l$ such that
$$
\sup_n\max\{ |x_{2n-1}|,\frac{|x_{2n}|}{\max\{ 1,|\log l_{\alpha_n}(X_0)|\} }\} <\infty .
$$

Let $O(1)$ denotes a bounded function and let $O(M):=M\cdot O(1)$ as $M\to\infty$. Moreover, $o(1)$ denotes a function which converges to $0$ as $M\to\infty$ and let $o(M)=M\cdot o(1)$.
Then $\bar{x}=(x_1,x_2,\ldots )$ are the Fenchel-Nielsen coordinates of $X\in T_{ls}(X_0)$ if and only if 
$$|x_{2n-1}|=O(1)$$ and 
$$|x_{2n}|=O(\max\{ 1,|\log l_{\alpha_n}(X_0)|\}).
$$
By \cite{ALPSS}, the image $F(T_{qc}(X_0))\subset l$ of the quasiconformal Teichm\"uller space $T_{qc}(X_0)$ consists of all $\bar{x}=(x_1,x_2,\ldots )$ such that 
$$\| \bar{x}\|_{\infty}<\infty ,$$
or equivalently
$$
|x_n|=O(1).
$$

\begin{theorem}
\label{thm:closure-qc}
Let $X_0$ be a complete hyperbolic surface with an upper bounded pants decomposition $\mathcal{P} =\{\alpha_n\}$. Then $X\in T_{ls}(X_0)$ is in the closure of $T_{qc}(X_0)$ for the metric $d_{ls}$ if and only if
\begin{equation*}
\sup_{\alpha_n\in\mathcal{P}}\Big{|}\log\frac{l_{\alpha_n}(X)}{l_{\alpha_n}(X_0)}\Big{|}<\infty
\end{equation*}
and
\begin{equation}
\label{eq:o(length)}
|t_{\alpha_n}(X)-t_{\alpha_n}(X_0)|=o(|\log l_{\alpha_n}(X_0)|)
\end{equation} 
as $|\log l_{\alpha_n}(X_0)|\to\infty$.
\end{theorem}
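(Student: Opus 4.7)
The plan is to transfer the problem to $l^\infty$ via the homeomorphism $F\colon T_{ls}(X_0)\to l^\infty$ of Theorem \ref{thm:main}. Since $F$ is a homeomorphism, $X\in\overline{T_{qc}(X_0)}$ in the $d_{ls}$-topology if and only if $F(X)\in\overline{F(T_{qc}(X_0))}$ in the sup-norm topology on $l^\infty$. The starting input is the description from \cite{ALPSS}: $F(T_{qc}(X_0))$ is exactly the set of $\bar{x}\in l^\infty$ whose \emph{unnormalized} twist differences $x_{2n}\cdot\max\{1,|\log l_{\alpha_n}(X_0)|\}$ are uniformly bounded in $n$ (the length entries are automatically bounded inside $l^\infty$). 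Both implications of Theorem \ref{thm:closure-qc} will be read off from this description.

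For the forward direction, I would pick $X_k\in T_{qc}(X_0)$ with $F(X_k)\to F(X)$ in $l^\infty$. Boundedness of the length entries of $F(X)$ is immediate. For the twist decay, given $\varepsilon>0$, choose $k$ with $\|F(X_k)-F(X)\|_\infty<\varepsilon/2$ and invoke $X_k\in T_{qc}(X_0)$ to obtain $M_k$ with $|t_{\alpha_n}(X_k)-t_{\alpha_n}(X_0)|\leq M_k$ for every $n$. The triangle inequality then yields
\[
\frac{|t_{\alpha_n}(X)-t_{\alpha_n}(X_0)|}{\max\{1,|\log l_{\alpha_n}(X_0)|\}}\leq \frac{M_k}{\max\{1,|\log l_{\alpha_n}(X_0)|\}}+\frac{\varepsilon}{2},
\]
which is at most $\varepsilon$ once $|\log l_{\alpha_n}(X_0)|\geq 2M_k/\varepsilon$. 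This is precisely the decay condition (\ref{eq:o(length)}).

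For the converse, I would construct an explicit approximation by truncation. Given $X$ satisfying both hypotheses, for each integer $N\geq 1$ define a surface $Y_N$ by $l_{\alpha_n}(Y_N)=l_{\alpha_n}(X)$ for all $n$ and
\[
t_{\alpha_n}(Y_N)=\begin{cases} t_{\alpha_n}(X), & |\log l_{\alpha_n}(X_0)|\leq N,\\ t_{\alpha_n}(X_0), & |\log l_{\alpha_n}(X_0)|>N.\end{cases}
\]
Length ratios for $Y_N$ are uniformly bounded by hypothesis. The unnormalized twist differences $|t_{\alpha_n}(Y_N)-t_{\alpha_n}(X_0)|$ vanish outside the truncation window and, inside it, are bounded (using that $X\in T_{ls}(X_0)$ gives $|t_{\alpha_n}(X)-t_{\alpha_n}(X_0)|=O(\max\{1,|\log l_{\alpha_n}(X_0)|\})\leq O(N)$ there) by a constant that depends on $N$ but is uniform in $n$. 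By \cite{ALPSS}, this forces $Y_N\in T_{qc}(X_0)$. Only the twist coordinates with $|\log l_{\alpha_n}(X_0)|>N$ contribute to $\|F(X)-F(Y_N)\|_\infty$, each contribution being $\frac{|t_{\alpha_n}(X)-t_{\alpha_n}(X_0)|}{|\log l_{\alpha_n}(X_0)|}$, and the supremum of these tends to $0$ as $N\to\infty$ by hypothesis (\ref{eq:o(length)}). Hence $F(Y_N)\to F(X)$ in $l^\infty$, so $X\in\overline{T_{qc}(X_0)}$.

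The only delicate point is verifying that each truncated surface $Y_N$ really lies in $T_{qc}(X_0)$: the criterion of \cite{ALPSS} demands a bound on the unnormalized twist difference that is \emph{uniform in $n$}, although it may depend on $Y_N$. The $O(N)$ bound on the truncation window is fine for this purpose because $N$ is held fixed for each individual approximant; the $l^\infty$-distance to $F(X)$ is then controlled solely by the tail $\{n:|\log l_{\alpha_n}(X_0)|>N\}$, where the hypothesis (\ref{eq:o(length)}) forces decay. This is the main conceptual mechanism of the proof.
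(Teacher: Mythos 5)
Your proof is correct and takes essentially the same approach as the paper's: transfer the problem to $l^\infty$ via the homeomorphism $F$ of Theorem~\ref{thm:main} and invoke the \cite{ALPSS} characterization of $F(T_{qc}(X_0))$ as the sequences with bounded unnormalized twist differences. The paper argues the forward direction by contradiction rather than your direct estimate, and in the converse direction it truncates by capping $|t_{\alpha_n}(X)-t_{\alpha_n}(X_0)|$ at $i$ rather than zeroing out the twist difference on $\{n:|\log l_{\alpha_n}(X_0)|>N\}$, but the mechanism is the same.
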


\begin{proof}
We first note that if $X_0$ has a (geodesic) pants decomposition which
is bounded from the above and from the below then (cf.
\cite{Shiga}, \cite{ALPS}) $T_{qc}(X_0)=T_{ls}(X_0)$. Therefore we assume that there is a pants decomposition of $X_0$ which is upper bounded with a sequence of cuffs whose lengths go to $0$.

Let $X_i\in T_{qc}(X_0)$ such that $X_i\to X$ in the length spectrum metric $d_{ls}$ as $i\to\infty$. Then $d_{ls}(X_0,X)<\infty$, namely $X\in T_{ls}(X_0)$. Let $\{\alpha_{n_k}\}_k$ be the set of all geodesics in $\mathcal{P}$ such that $l_{\alpha_{n_k}}(X_0)\leq\frac{1}{e}$. Then by Theorem \ref{thm:main}
$$
\sup_k\frac{|t_{\alpha_{n_k}}(X_i)-t_{\alpha_{n_k}}(X)|}{|\log l_{\alpha_{n_k}}(X_0)|}\to 0
$$
as $i\to\infty$. Thus for any $\epsilon >0$ there exists $i_0$ such that for all $i>i_0$ we have
$$
|t_{\alpha_{n_k}}(X)-t_{\alpha_{n_k}}(X_0)|\leq |t_{\alpha_{n_k}}(X_i)-t_{\alpha_{n_k}}(X_0)|+\epsilon |\log l_{\alpha_{n_k}}(X_0)|.
$$
Assume on the contrary that (\ref{eq:o(length)}) is false. Then there exists $C>0$ and  subsequence $k_j$ such that $l_{\alpha_{n_{k_j}}}(X_0)\to 0$ as $j\to\infty$ and
$$
|t_{\alpha_{n_{k_j}}}(X)-t_{\alpha_{n_{k_j}}}(X_0)|\geq C|\log l_{\alpha_{n_{k_j}}}(X_0)|.
$$
Choose $\epsilon =\frac{C}{2}$. The above two inequalities give for all $i>i_0$
$$
 |t_{\alpha_{n_{k_j}}}(X_{i})-t_{\alpha_{n_{k_j}}}(X_0)|\geq \frac{C}{2}|\log l_{\alpha_{n_{k_j}}}(X_0)|
$$
which contradicts $X_{i}\to X$ as $i\to\infty$. Thus $X$ satisfies (\ref{eq:o(length)}).

\vskip .1 cm

Assume that $X\in T_{ls}(X_0)$ satisfies (\ref{eq:o(length)}). We need to find a sequence $X_i\in T_{qc}(X_0)$ such that $X_i\to X$ as $i\to\infty$ for the length spectrum metric $d_{ls}$. 
For a given $i\in\mathbb{N}$, let $X_i\in T_{ls}(X_0)$ be defined by the Fenchel-Nielsen coordinates
$$
l_{\alpha_n}(X_i):=l_{\alpha_n}(X)
$$
and 
\begin{equation}
t_{\alpha_n}(X_i)-t_{\alpha_n}(X_0):=\mbox{sgn} [t_{\alpha_n}(X)-t_{\alpha_n}(X_0)] \min\{ |t_{\alpha_n}(X)- t_{\alpha_n}(X_0)|,i\}.
\end{equation}
By \cite{ALPSS}, we have $X_i\in T_{qc}(X_0)$. Let $M=d_{ls}(X_0,X)$ and choose $\epsilon >0$. Since $X$ satisfies (\ref{eq:o(length)}), it follows that there exists $\delta >0$ such that
$$
\frac{|t_{\alpha_n}(X)-t_{\alpha_n}(X_0)|}{|\log l_{\alpha_n}(X_0)|}<\frac{\epsilon}{2}
$$
for all $\alpha_n\in\mathcal{P}$ with $l_{\alpha_n}(X_0)\leq\delta$. Moreover, there exists $C=C(\delta )>0$ such that
$$
|t_{\alpha_n}(X)-t_{\alpha_n}(X_0)|\leq C
$$
for all $\alpha_n\in\mathcal{P}$ with $l_{\alpha_n}(X_0)>\delta$. 
 
For $l_{\alpha_n}(X_0)\leq\delta$, we have
\begin{equation*}
\begin{split}
\frac{|t_{\alpha_n}(X)-t_{\alpha_n}(X_i)|}{|\log l_{\alpha_n}(X_0)|}\leq 
\frac{|t_{\alpha_n}(X)-t_{\alpha_n}(X_0)|}{|\log l_{\alpha_n}(X_0)|}+ \frac{|t_{\alpha_n}(X_0)-t_{\alpha_n}(X_i)|}{|\log l_{\alpha_n}(X_0)|}\leq \\
\leq 2\frac{|t_{\alpha_n}(X)-t_{\alpha_n}(X_0)|}{|\log l_{\alpha_n}(X_0)|}<\epsilon .\ \ \ \ \ \ \ \ \ \ \ \ \ \ \ \ \ \ \ \ \ \ \ \ \ \ 
\end{split}
\end{equation*}
 
 For $l_{\alpha_n}(X_0)>\delta$, we have that $t_{\alpha_n}(X_i)=t_{\alpha_n}(X)$ for each $i>C$. Thus $X_i\to X$ as $i\to\infty$ in the length spectrum metric $d_{ls}$.
 \end{proof}

\end{document}